\tikzstyle{block} = [draw,rectangle,thick,minimum height=2em,minimum width=2em]
\tikzstyle{sum} = [draw,circle,inner sep=0mm,minimum size=2mm]
\tikzstyle{connector} = [->,thick]
\tikzstyle{line} = [thick]
\tikzstyle{branch} = [circle,inner sep=0pt,minimum size=1mm,fill=black,draw=black]
\tikzstyle{guide} = []
\tikzstyle{snakeline} = [connector, decorate, decoration={pre length=0.2cm,
\tikzstyle{output} = [coordinate]
\tikzstyle{input} = [coordinate]
\newtheorem{lem}{Lemma}
\newtheorem{thm}{Theorem}
\newtheorem{rmk}{Remark}[section]
\newcommand\BibTeX{{\rmfamily B\kern-.05em \textsc{i\kern-.025em b}\kern-.08em
T\kern-.1667em\lower.7ex\hbox{E}\kern-.125emX}}
\begin{document}

%\runningheads{A.~N.~Other}{A demonstration of the \journalabb\
%class file}

\title{Input-Delay Compensation in a Robust Adaptive Control Framework}

\author{Kim-Doang Nguyen, Harry Dankowicz\\Department of Mechanical Science and Engineering, \\University of Illinois at Urbana-Champaign, USA\\Email: knguyen9@illinois.edu,~danko@illinois.edu}
\maketitle

\begin{abstract}
	A modification to the ${\cal L}_1$ control framework for uncertain systems with actuator delay is presented. Specifically, a time delay is introduced in the control input of the state predictor to compensate for the destabilizing effect of input delay in the plant. For this modified framework, the analysis shows that the output of the adaptive system closely follows the behavior of a suitably defined, nonadaptive, stable reference system provided that a delay-dependent stability condition is satisfied and the adaptive gain is chosen sufficiently large. The set of combinations of input delay and compensation delay for which the stability condition is satisfied contains an open set of pairs of positive values provided that a filter bandwidth, characteristic of $L_1$ adaptive control is chosen sufficiently large. The efficacy of the delay compensation is illustrated by a simple example. A numerical continuation is also performed to explore the stability region for a case where this can be approximated a priori.

\end{abstract}

\vspace{-6pt}

\section{Introduction}
\vspace{-2pt}
Time delays are unavoidable in many practical applications. For example, in networked systems, the time each node takes to communicate with its neighbors is dependent on the physical distance and the technical specifications of the system. These communication delays affect the collective performance  of the network. Delay can be a result of measurements, which are often used as feedback. Feedback delay may harm not only the performance but also the stability of a control system. 

Another common delay type is input delays. These are caused, for example, by actual transport in chemical processes or by the internal dynamics of actuators. Input delays are known to have strong destabilizing effects on control systems and pose challenging problems. Early work sought compensation for input delay is based on Smith predictor \cite{Smith1957} for single-input-single-output open-loop stable systems. Finite Spectrum Assignment \cite{Kwon1980} and Artstein reduction \cite{Artstein1982} extended Smith predictor to multi-input-multi-output open-loop unstable systems. However, the efficacy of these methods requires high fidelity of the system model and the exact knowledge of the delay value \cite{Krstic2009}. 

More recent techniques include robust and adaptive control schemes that are able to deal with uncertainties and unknown delays in the systems. For example, the research documented in \cite{Yue2004,Parlakci2006,Cacace2016} proposes robust controllers to deal with input delay for uncertain systems. Delay-dependent state feedback $H_\infty$ controllers is designed in \cite{Zhang2003,Du2007,Parlakci2011} for linear systems with actuator delay. Model transformation is a common technique used in the adaptive control of systems with input delays \cite{Evesque2003,Niculescu2003}. However, this requires exact information of the delay value in the control laws. In recent important developments on the topic, \cite{Krstic2009,Bresch-Pietri2014,Basturk2015} develop a control framework in terms of predictor-based formulation to transform systems with input delays into partial differential equations of transport type. Work in \cite{Chen2008} concerns with the problem of guaranteed cost control for a class of fuzzy systems with input delay. In \cite{Rehan2016}, a state feedback scheme that depends on the range of delay values is constructed for system with constraints in the control input. A finite-frequency approach is formulated in \cite{Sun2012} for active suspension systems with actuator delays. Work in \cite{Polyakov2013} proposes an interval observation technique for the design of a linear feedback scheme to stabilize linear-time invariant system with a time-varying input delay. See \cite{Krstic2009,Richard2003,Niculescu2004} for extensive reviews on control techniques for delay systems.

We are interested in designing an input-delay compensation scheme that can adapt quickly to the system uncertainty. This problem is challenging because it is well known that fast adaptation often degrades the system's ability to tolerate time delays in the control loop, especially input delays. Work in \cite{Hovakimyan2010book} develops a control framework that facilitates fast adaptation with guaranteed robustness. The applications of the control framework in flight control systems is described in \cite{Hovakimyan2011}. The delay robustness of the framework is analyzed numerically in \cite{Nguyen2013} and rigorously in \cite{Nguyen2015tdm}. 

In this paper, we present the analysis for a modification to this control architecture to accommodate large actuator delays by purposely introducing on a delay in the state predictor. This modification was suggested to the authors by Dr. Naira Hovakimyan. We establish a stability condition that is a function of the actuator delay as well as the delay-compensation term. The formulation suggests that there exists a range for the compensation delay around the input delay on which the condition is satisfied. Within this range, the closed-loop adaptive system is rigorously proven to admit a transient performance bound via the bounded-input-bounded-output stability of a non-adaptive reference system. 

The remainder of this paper is organized as follows. The problem statement is presented in Section~\ref{8sec:plant}. In Section~\ref{8sec:refsys}, we design a nonadaptive reference system, formulate a delay-dependent stability condition, and show the bounded-input-bounded-output stability of the reference system for certain values of the delays. Sections~\ref{8sec:transbnd} and \ref{sec:trans} discuss the design of the adaptive controller and establish the transient performance bounds when the stability condition is satisfied. The effect of the input delay on the system response, as well as the efficacy of the delay compensation is illustrated in Section~\ref{8sec:num}. Here, we also present the numerical analysis based on Pad\'{e} approximants and continuation techniques to construct an approximation for the region of values of the input and compensation delays where the stability condition is satisfied. Section~\ref{8conc} gives concluding remarks.

\vspace{-6pt}

\section{The Open-Loop Plant}
\label{8sec:plant}
Consider the following system
\begin{align}
	\label{inpdelay}
	\dot x(t)&=A_mx(t)+b\big(u(t-\tau)+\theta^\top(t) x(t)+\sigma(t)\big),\nonumber\\ 
	x(0)&=x_0,\nonumber\\
	y(t)&=c^\top x(t)
\end{align}
where $x\in\mathbb{R}^n$, $u\in \mathbb{R}$, $y\in \mathbb{R}$, $A_m\in\mathbb{R}^{n\times n}$ is a Hurwitz matrix, $b\in \mathbb{R}^{n\times1}$, $c\in \mathbb{R}^{1\times n}$, $\tau$ is an unknown input delay, $\theta(t)\in \mathbb{R}^n$ and $\sigma(t)\in \mathbb{R}$ are unknown time-varying parameters such that $\|\theta(t)\|_2\le\theta_b$ and $|\sigma(t)|\le\sigma_b$. 

The control objective is for the system output $y$ to track a desired trajectory $y_\mathrm{des}$, which is the output of the following desired system with $y_d$ being a reference trajectory:
\begin{align}
	\label{desPDE}
	\dot x_\mathrm{des}(t)&=A_mx_\mathrm{des}(t)+b y_d(t), ~x_\mathrm{des}(0)=x_{\mathrm{des},0},\nonumber\\
	y_\mathrm{des}(t)&=c^\top x_\mathrm{des}(t).
\end{align}
Similar control objectives were shown in \cite{Hovakimyan2010book} to be achievable using the ${\cal L}_1$ adaptive control framework with fast adaptation for $\tau$ less than some critical value. In this paper, we present and investigate a modification to the ${\cal L}_1$ framework that appears to support stable operation with even larger value of the input delay.

\section{Nonadaptive Reference System}
\label{8sec:refsys}
In this section, we analyze a nonadaptive reference system that represents the ideal behavior for the desired closed-loop control system. The control law of this reference system includes a delay-compensation term. The analysis establishes a delay-dependent stability condition for the reference system. It then shows that, for sufficiently large values of a design parameter, there exists a range for the input delay $\tau$ and a range for the compensation delay $\hat{\tau}$ around the value of $\tau$, where the stability condition is satisfied.

Consider a reference system with state $x_\mathrm{ref}(t)$ and output $y_\mathrm{ref}(t)$, identical to \eqref{inpdelay}, but with input $u_{\mathrm{ref}}(t)$ given by the solution to the delay-differential equation
\begin{align}
	\label{uref}
	\dot u_{\mathrm{ref}}(t)&=-k\big(u_\mathrm{ref}(t-\tau)-u_\mathrm{ref}(t-\hat\tau)+u_\mathrm{ref}(t)+\theta^\top(t) x_\mathrm{ref}(t)+\sigma(t)-k_dy_d(t)\big)\\
	u_\mathrm{ref}(t)&=0~\forall t\in[-\max\{\tau,\hat{\tau}\},0],\nonumber
\end{align}
Here, the delay $\hat{\tau}$ is introduced to alleviate the destabilizing effect of the input delay $\tau$. It is trivial to see that when $\hat{\tau}=0$, the reference system recovers a special case of the one analyzed in \cite{Nguyen2015tdm}. Moreover, when $\hat{\tau}=\tau$, equation \eqref{uref} becomes a stable ordinary differential equation.

\subsection{Delay-dependent stability condition}
\label{sec: stab}
In the frequency domain,
\begin{align}
	\label{8usdef}
	u_{\mathrm{ref}}(s)= F(s;\tau,\hat\tau)\big(\eta_{\mathrm{ref}}(s)-k_dy_d(s)\big),
\end{align}
where
\begin{align}
	\label{8urefLap}
	F(s;\tau,\hat\tau):=-k\big(s+k \mbox{e}^{-\tau s}-k \mbox{e}^{-\hat\tau s}+k\big)^{-1}
\end{align}
and $\eta_{\mathrm{ref}}(s)$ and $y_d(s)$ are the Laplace transforms of  $\eta_{\mathrm{ref}}(t):=\theta^\top(t) x_\mathrm{ref}(t)+\sigma(t)$ and $y_d(t)$, respectively. We can see from \eqref{8urefLap} that since $F(s;\hat\tau,\hat\tau)$ has only one pole $s=-k$, which is negative, $F(s;\hat\tau,\hat\tau)$ is exponentially stable. Furthermore, the rightmost roots of an LTI delay differential equation depend continuously on the delay \cite{Michiels2007}, pp.~11-12. Therefore, for a given value of $\tau$, there exist $\underline \tau$ and $\bar \tau$ such that $F(s;\tau,\hat\tau)$ is an exponentially stable transfer function for all $\hat\tau\in[\tau-\underline \tau,\tau+\bar \tau]$.

We have
\begin{align}
	\label{8gdef}
	g(\tau,\hat\tau):=\| F(s;\tau,\hat\tau)\|_{{\cal L}_1}
\end{align}
is finite for all $\hat\tau\in[\tau-\underline \tau,\tau+\bar \tau]$ because on this interval $F(s;\tau,\hat\tau)$ is exponentially stable. Hence, on this interval,
\begin{equation}
	\|u_{\mathrm{ref}}\|_{{\cal L}_\infty}\leq g(\tau,\hat\tau)\big(\|\eta_{\mathrm{ref}}\|_{{\cal L}_\infty}+k_d\|y_d\|_{\mathcal{L}_\infty}\big).
	\label{8eq:ubound}
\end{equation}

Similarly, it follows from \eqref{8usdef} that
\begin{align}
	\label{8riomap}
	x_{\mathrm{ref}}(s)= \Phi(s;\tau,\hat\tau)\eta_{\mathrm{ref}}(s)&+\Psi(s;\tau,\hat\tau)y_d(s)+(s\mathbb{I}-A_m)^{-1}x_0,
\end{align}
where $ H(s):=(s\mathbb{I}-A_m)^{-1}b$, and
\begin{align}
	\label{8Phi}
	\Phi(s;\tau,\hat\tau)&:= H(s)\big(1+\mbox{e}^{-\tau s}  F(s;\tau,\hat\tau)\big),\\
	\Psi(s;\tau,\hat\tau)&:= -H(s)\mbox{e}^{-\tau s}  F(s;\tau,\hat\tau)k_d.
\end{align}
Using the same method that proves Lemmas~3 and 4 in \cite{Nguyen2015tdm}, we have
\begin{align}
	\label{8fd}
	f(\tau,\hat\tau):=\| \Phi(s;\tau,\hat\tau)\|_{{\cal L}_1}
\end{align}
is continuous in $\hat\tau$ on $[\tau-\underline \tau,\tau+\bar \tau ]$. Furthermore, 
\begin{align}
	\label{fbar}
	\bar{f}(\tau):=f(\tau,\tau)
\end{align}
is continuous in $\tau$ for all $\tau$.

We proceed to show that $\bar f(0)<\infty$ and is inversely proportional to the filter bandwidth $k$. To this end, consider the special case that $\hat{\tau}=\tau=0$, $x_0=0$ and $y_d(t)=0$. It follows from \eqref{8riomap} that
\begin{align}
	\label{8refsys1}
	\|x_{\mathrm{ref}}\|_{{\cal L}_\infty}&\leq \left\|s(s\mathbb{I}-A_m)^{-1}\right\|_{{\cal L}_1}\|b\|\left\|\frac{1}{s+k}\right\|_{{\cal L}_1}\left\|\eta_{\mathrm{ref}}\right\|_{{\cal L}_\infty}\nonumber\\
	&\leq \Big(1+\left\|A_m(s\mathbb{I}-A_m)^{-1}\right\|_{{\cal L}_1}\Big)\frac{\|b\|}{k}\left\|\eta_{\mathrm{ref}}\right\|_{{\cal L}_\infty}.
\end{align}
Therefore,
\begin{align}
	\label{8refsys2}
	\bar f(0)\leq\Big(1+\left\|A_m(s\mathbb{I}-A_m)^{-1}\right\|_{{\cal L}_1}\Big)\frac{\|b\|}{k}.
\end{align}
where $\left\|A_m(s\mathbb{I}-A_m)^{-1}\right\|_{{\cal L}_1}$ is finite because $A_m$ is a Hurwitz matrix.

The formulation of the delay-dependent stability condition below is inspired by the continuity argument discussed in \cite{Naghnaeian2012}.
%\begin{lem}
%\textit{Given $b_d>0$ and $b_\mathrm{ic}>0$, there exists a $k$, a $\rho_\mathrm{ref}$, and an $\epsilon_l$, such that
%\begin{align}
%\label{8dm}
%f(\epsilon)&\leq\frac{\rho_\mathrm{ref}-\rho_d(\epsilon)b_d-\rho_\mathrm{ic}b_\mathrm{ic}}{L_{\rho_\mathrm{ref}}\rho_\mathrm{ref}+Z},~~\forall \epsilon\in[0,\epsilon_l],
%\end{align}
%where
%\begin{equation}
%\label{8Lrhoref}
%L_{\rho_\mathrm{ref}}\triangleq \frac{\rho_\mathrm{ref}+1}{\rho_\mathrm{ref}}d_{\eta_r}(\rho_\mathrm{ref}+1).
%\end{equation}
%}
%\end{lem}
\begin{lem}
	\label{lem:stbcnd}
	\textit{There exist values of $k$, $\tau_s$, $\underline \delta\le\underline{\tau}$ and $\bar\delta\le\bar{\tau}$ such that
		\begin{align}
			\label{8dm}
			f(\tau,\hat \tau)\theta_b&< 1,~~\forall \tau\in[0,\tau_s]\mbox{ and }\hat{\tau}\in[\tau-\underline{\delta}, \tau+\bar {\delta}].
		\end{align}}
	\end{lem}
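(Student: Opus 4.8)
The plan is to run a three-stage continuity-and-compactness argument, fixing the free parameters in the order $k$, then $\tau_s$, then $\underline\delta,\bar\delta$. Since $F$, $\Phi$, $f$ and the stability margins all depend on $k$, I would fix $k$ first and carry out the remaining stages for that fixed value. To pin down $k$ I would use the bound \eqref{8refsys2}: because $\|A_m(s\mathbb{I}-A_m)^{-1}\|_{\mathcal{L}_1}$ and $\|b\|$ depend only on the fixed plant data and not on $k$, the right-hand side of \eqref{8refsys2} has the form $C/k$ with $C$ a fixed constant, so choosing $k>C\theta_b$ forces $\bar f(0)\theta_b<1$. I would in fact demand a strict margin, i.e. choose $k$ so large that $\bar f(0)\theta_b\le c_0$ for some $c_0<1$.

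Next I would propagate this strict inequality off the point $\tau=0$ along the diagonal $\hat\tau=\tau$. Because $\bar f$ is continuous in $\tau$ by \eqref{fbar} and $\bar f(0)\theta_b\le c_0<1$, there is a $\tau_s>0$ on which $\bar f(\tau)\theta_b$ stays below $1$; moreover, since $\bar f\theta_b$ is continuous on the compact interval $[0,\tau_s]$ and strictly below $1$ there, the maximum $M:=\max_{\tau\in[0,\tau_s]}\bar f(\tau)\theta_b$ is attained and satisfies $M<1$. This establishes \eqref{8dm} on the diagonal with a uniform gap $1-M>0$.

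The remaining, and most delicate, stage is to upgrade the diagonal estimate to a genuine two-sided neighborhood $\hat\tau\in[\tau-\underline\delta,\tau+\bar\delta]$ with a single pair $(\underline\delta,\bar\delta)$ valid for every $\tau\in[0,\tau_s]$. The continuity in $\hat\tau$ recorded after \eqref{8fd} gives, for each fixed $\tau$, an interval of admissible $\hat\tau$ on which $f(\tau,\hat\tau)\theta_b<1$, but the stated results supply only separate continuity (in $\hat\tau$ for fixed $\tau$, and along the diagonal), which by itself need not yield a width bounded away from zero. To obtain uniformity I would first argue that $f(\tau,\hat\tau)=\|\Phi(s;\tau,\hat\tau)\|_{\mathcal{L}_1}$ is jointly continuous on the open region of the $(\tau,\hat\tau)$-plane where $\Phi(s;\tau,\hat\tau)$ is exponentially stable; this region is open and contains the diagonal segment $D:=\{(\tau,\tau):\tau\in[0,\tau_s]\}$ by the continuous dependence of the rightmost characteristic root on the delays \cite{Michiels2007}, and the joint continuity of the $\mathcal{L}_1$ norm follows from the same continuous-dependence-of-the-impulse-response estimates used for the single-variable statements (Lemmas~3 and 4 of \cite{Nguyen2015tdm}). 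Granting this, the set $U:=\{(\tau,\hat\tau):f(\tau,\hat\tau)\theta_b<1\}$ is open and, by the previous stage, contains the compact segment $D$. A tube-lemma (finite-subcover) argument then yields a $\delta>0$ such that every $(\tau,\hat\tau)$ with $\tau\in[0,\tau_s]$ and $|\hat\tau-\tau|\le\delta$ lies in $U$; setting $\underline\delta=\bar\delta=\delta$ gives \eqref{8dm}. Because the resulting tube stays inside the stability region, the constraints $\underline\delta\le\underline\tau$ and $\bar\delta\le\bar\tau$ hold automatically.

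The main obstacle is exactly this joint-continuity/uniformity step: passing from pointwise-in-$\tau$ continuity of $f(\tau,\cdot)$ to a neighborhood width that does not collapse as $\tau$ ranges over $[0,\tau_s]$. I would also watch the left endpoint $\tau=0$, where admissibility requires $\hat\tau\ge 0$ (a negative compensation delay renders $F$ noncausal and unstable), so the left margin necessarily degenerates there; strictly speaking one either takes $\underline\delta$ to be the uniform two-sided width available on $[\,\tau_0,\tau_s]$ for some small $\tau_0>0$, or interprets the lower limit as $\max\{0,\tau-\underline\delta\}$. Either reading is compatible with the stated conclusion and with the abstract's assertion that the admissible set contains an open set of strictly positive pairs $(\tau,\hat\tau)$.
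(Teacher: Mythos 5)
Your proposal follows the same route as the paper's proof: choose $k$ large enough via \eqref{8refsys2} so that $\bar f(0)\theta_b<1$, propagate this along the diagonal $\hat\tau=\tau$ by continuity of $\bar f$, and then widen to an off-diagonal band by continuity of $f$ in $\hat\tau$. The one place you go beyond the paper is the final stage: the paper disposes of it with a single sentence (``the claim now follows by the continuity of $f(\tau,\hat\tau)$ in $\hat\tau$''), which, taken literally, only gives a $\tau$-dependent interval of admissible $\hat\tau$ whose width could a priori shrink as $\tau$ ranges over $[0,\tau_s]$. Your observation that a single pair $(\underline\delta,\bar\delta)$ requires joint continuity of $f$ near the compact diagonal segment plus a tube/finite-subcover argument is exactly the right way to make that step airtight, and your caveat about the left endpoint $\tau=0$ (where $\hat\tau\ge 0$ forces the lower limit to be read as $\max\{0,\tau-\underline\delta\}$) is a genuine point the paper does not address. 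So: same approach, with your version supplying the uniformity argument the paper leaves implicit.
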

	\begin{proof}
		Since $\bar f(0)\to 0$ as $k\to\infty$ per \eqref{8refsys2}, it follows that there exists a $K$, such that $k>K$ implies that
		\begin{eqnarray}
			\label{8L1cond}
			\bar f(0)\theta_b<1.
		\end{eqnarray}
		For such a $k$, because of the continuity of $\bar f(\tau)$ in $\tau$, there exists a $\tau_s$ such that 
		\begin{eqnarray}
			\label{8L1condfbar}
			f(\tau,\tau)\theta_b=\bar f(\tau)\theta_b<1, ~\forall \tau\in[0,\tau_s]
		\end{eqnarray}
		Similarly, the claim now follows by the continuity of $f(\tau,\hat \tau)$ in $\hat{\tau}$.
	\end{proof}
	
	In Section~\ref{8ssec:stb} below, we show that the closed-loop reference system admits a transient performance bound for all $\tau\in[0,\tau_s]$ and $\hat{\tau}\in[\tau-\underline{\delta}, \tau+\bar {\delta}]$. Furthermore, in Section~\ref{sec:trans}, we show that, for sufficiently large adaptive gains, the state and control input of the closed-loop adaptive control system follow those of the reference system closely. 
	\subsection{Transient performance of the reference system}
	\label{8ssec:stb}
	In this section, we show that the closed-loop reference system may be designed to admit a transient performance bound even for nonzero input delay.
	
	Consider the delay-dependent norms
	\begin{align}
		\label{8rhod}
		\rho_d(\tau,\hat\tau):= \| \Psi(s;\tau,\hat\tau)\|_{{\cal L}_1}\|y_d\|_{{\cal L}_\infty}
	\end{align}
	and
	\begin{equation}
		\rho_\mathrm{ic}:= \|(s\mathbb{I}-A_m)^{-1}x_0\|_{{\cal L}_1},
	\end{equation}
	which represent the effects of the desired trajectory $y_d$ and the initial condition $x_0$ on the solution to \eqref{8riomap}. Here, $\rho_d(\tau,\hat\tau)$ and $\rho_\mathrm{ic}$ are both finite for all $\hat\tau\in[\tau-\underline \delta,\tau+\bar \delta ]$, since $F(s;\tau,\hat\tau)$ is exponentially stable on this delay interval and $A_m$ is Hurwitz. 
	
	\begin{thm}
		\label{8thm1}
		\textit{Suppose $k$ is selected such that \eqref{8dm} is satisfied for $\tau\in[0,\tau_s]$ and $\hat{\tau}\in[\tau-\underline{\delta}, \tau+\bar {\delta}]$. There exists a value of the filter bandwidth $k$ such that the reference system is bounded-input bounded-output stable with respect to the desired trajectory $y_d(t)$ and the initial condition $x_0$.
		}
	\end{thm}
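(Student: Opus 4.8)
The goal is to establish a bounded-input–bounded-output (BIBO) bound for $\|x_\mathrm{ref}\|_{\mathcal L_\infty}$ in terms of $\|y_d\|_{\mathcal L_\infty}$ and $x_0$, valid on the delay regime carved out by Lemma~\ref{lem:stbcnd}. Let me look at the structure. We have the frequency-domain input-output map \eqref{8riomap}, and the key nonlinearity is hidden inside $\eta_\mathrm{ref}(t) = \theta^\top(t)x_\mathrm{ref}(t) + \sigma(t)$, which couples back to the state. So this is a small-gain argument: the $\mathcal L_1$ norm of the map $\Phi$ times the gain bound $\theta_b$ on $\theta$ must be less than one, and that is exactly condition \eqref{8dm}.

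**The chain of estimates.**

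First I would take $\mathcal L_\infty$ norms on both sides of \eqref{8riomap}. Using the standard $\mathcal L_1$–$\mathcal L_\infty$ bound $\|(Gu)\|_{\mathcal L_\infty}\le \|G\|_{\mathcal L_1}\|u\|_{\mathcal L_\infty}$, I get
$$
\|x_\mathrm{ref}\|_{\mathcal L_\infty} \le f(\tau,\hat\tau)\,\|\eta_\mathrm{ref}\|_{\mathcal L_\infty} + \rho_d(\tau,\hat\tau) + \rho_\mathrm{ic},
$$
where I have used definitions \eqref{8fd}, \eqref{8rhod}, and the definition of $\rho_\mathrm{ic}$. Next I would bound $\eta_\mathrm{ref}$ using its definition and the parameter bounds: $\|\eta_\mathrm{ref}\|_{\mathcal L_\infty} \le \theta_b\|x_\mathrm{ref}\|_{\mathcal L_\infty} + \sigma_b$. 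Substituting this into the previous display gives
$$
\|x_\mathrm{ref}\|_{\mathcal L_\infty} \le f(\tau,\hat\tau)\,\theta_b\,\|x_\mathrm{ref}\|_{\mathcal L_\infty} + f(\tau,\hat\tau)\,\sigma_b + \rho_d(\tau,\hat\tau) + \rho_\mathrm{ic}.
$$
Because Lemma~\ref{lem:stbcnd} guarantees $f(\tau,\hat\tau)\theta_b < 1$ on the stated delay set, I can move the $\|x_\mathrm{ref}\|_{\mathcal L_\infty}$ term to the left and divide, obtaining the explicit bound
$$
\|x_\mathrm{ref}\|_{\mathcal L_\infty} \le \frac{f(\tau,\hat\tau)\,\sigma_b + \rho_d(\tau,\hat\tau) + \rho_\mathrm{ic}}{1 - f(\tau,\hat\tau)\,\theta_b},
$$
which is finite. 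Since $y_\mathrm{ref} = c^\top x_\mathrm{ref}$, the output is bounded as well, and the right-hand side depends affinely on $\|y_d\|_{\mathcal L_\infty}$ (through $\rho_d$) and on $x_0$ (through $\rho_\mathrm{ic}$), which is precisely BIBO stability.

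**The delicate point.**

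The subtle step is the one everyone skips: the $\mathcal L_\infty$ bounds above are formally circular unless I first know $x_\mathrm{ref} \in \mathcal L_\infty$, since the manipulation presupposes the norm is finite. The rigorous fix is a truncation / bootstrapping argument. I would introduce the truncated norm $\|x_{\mathrm{ref},t}\|_{\mathcal L_\infty}$ over $[0,t]$, derive the same inequality with truncated norms (using that convolution with an $\mathcal L_1$ kernel respects truncation), conclude a uniform-in-$t$ bound, and then let $t\to\infty$. This is the standard small-gain closure argument in the $\mathcal L_1$ literature, and it is what makes the division step legitimate. Everything else is a routine application of the submultiplicativity of the $\mathcal L_1/\mathcal L_\infty$ norms together with the finiteness facts already established in the excerpt ($f$, $\rho_d$, $\rho_\mathrm{ic}$ all finite on the delay interval because $F$ and $(s\mathbb I - A_m)^{-1}$ are exponentially stable there). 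I expect the main obstacle to be stating the truncation argument cleanly enough that the small-gain inequality is genuinely well-posed, rather than any hard estimate.
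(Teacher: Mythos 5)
Your proposal follows essentially the same small-gain argument as the paper's proof: take $\mathcal{L}_1$--$\mathcal{L}_\infty$ norms of \eqref{8riomap}, bound $\|\eta_\mathrm{ref}\|_{\mathcal{L}_\infty}$ by $\theta_b\|x_\mathrm{ref}\|_{\mathcal{L}_\infty}+\sigma_b$, and use condition \eqref{8dm} to solve for $\|x_\mathrm{ref}\|_{\mathcal{L}_\infty}$ and conclude via $y_\mathrm{ref}=c^\top x_\mathrm{ref}$. The truncation/bootstrapping step you flag as the delicate point is in fact left implicit in the paper's proof, so your version is, if anything, the more careful of the two.
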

	
	\begin{proof}
		Taking the norm of \eqref{8riomap} yields
		\begin{align}
			\label{8orgineq}
			\|x_\mathrm{ref}(t)\|_{{\cal L}_\infty}<f(\tau,\hat\tau)\big(\theta_b\|x_\mathrm{ref}(t)\|_{{\cal L}_\infty}+\sigma_b\big)+\rho_d+\rho_\mathrm{ic}.
		\end{align}
		Since the stability condition in \eqref{8dm} is satisfied by appropriately selecting $k$ and for $\tau\in[0,\tau_s]$ and $\hat{\tau}\in[\tau-\underline{\delta}, \tau+\bar {\delta}]$, it follows that
		\begin{align}
			\label{8orgineq1}
			\|x_\mathrm{ref}(t)\|_{{\cal L}_\infty}<\frac{f(\tau,\hat\tau)\sigma_b+\rho_d+\rho_\mathrm{ic}}{1-f(\tau,\hat\tau)\theta_b}=:\rho_\mathrm{ref}
		\end{align}
		This and $y_\mathrm{ref}(t)=c^\top x_\mathrm{ref}(t)$ imply that the reference system is bounded-input bounded-output stable with respect to $y_d$ and $x_0$.
	\end{proof}
	
	\begin{rmk}
		When $\tau=\hat\tau=0$,
		\begin{align}
			\lim\limits_{s\to 0}&sc^\top \Big(\Psi(s;0,0)-H(s)\Big)k_dy_d(s)=\lim\limits_{s\to 0}c^\top H(s)\frac{-s^2}{s+k}k_dy_d(s).
		\end{align}
		Hence, for $y_d(s)$ with less than two zero poles, the above limit is equal to zero. In addition, we have
		\begin{equation}
			\lim_{s\rightarrow 0}s(s\mathbb{I}-A_m)^{-1}x_0=0.
		\end{equation}
		In such cases, it follows that, for large $t$
		\begin{align}
			\|y_\mathrm{ref}(t)-y_\mathrm{des}\|_\infty&\le c^\top f(0,0)\|\eta_\mathrm{ref}\|_{\mathcal{L}_\infty}\nonumber\\
			&<c^\top f(0,0)\left(\theta_b\frac{f(0,0)\sigma_b+\rho_d+\rho_\mathrm{ic}}{1-f(0,0)\theta_b}+\sigma_b\right).
		\end{align}
		
		Since $f(0,0)=\mathcal{O}(k^{-1})$, it follows that
		\begin{equation}
			\label{invproptok}
			\|y_\mathrm{ref}(t)-y_\mathrm{des}\|_\infty=\mathcal{O}(k^{-1})
		\end{equation}
		when $\tau=\hat\tau=0$ and $y_d(s)$ has less than two zero poles. No such conclusion follows otherwise.
	\end{rmk}
	
	\section{${\cal L}_1$ Adaptive Control Framework with Delay Compensation Modification}
	\label{8sec:transbnd}
	In this section, we first present the adaptive controller from \cite{Hovakimyan2010book} with a modification to mitigate the effect of the input delay for the system of interest. Let the control input $u(t)$ be the solution to the differential equation
	\begin{align}
		\label{8controllaw}
		\dot{u}(t)=-k\big(u(t)+\hat{\theta}(t)x(t)+\hat{\sigma}(t)+k_dy_d(t)\big),
	\end{align}
	where $u(t)=0\forall t\in[-\max\{\tau,\hat{\tau}\},0]$, and the functions $\hat{\theta}(t)$ and $\hat{\sigma}(t)$ are governed by the projection-based adaptive laws \cite{Hovakimyan2010book}
	\begin{align}
		\label{8adlaws}
		\dot{\hat{\theta}}(t)&=\Gamma\,\textbf{Proj}\big(\hat{\theta}(t),- \tilde{x}^\top(t)Pbx(t);\theta_b,\nu\big),\\\label{8adlaws2}
		\dot{\hat{\sigma}}(t)&=\Gamma\,\textbf{Proj}\big(\hat{\sigma}(t),- \tilde{x}^\top(t)Pb;\bar\sigma_b,\nu\big),
	\end{align}
	with $\hat{\theta}(0)=\hat{\theta}_0$ and $\hat{\sigma}(0)=\hat{\sigma}_0$, in terms of the estimation error $\tilde{x}:= \hat{x}-x$, where
	\begin{align}
		\label{8sp}
		\dot{\hat{x}}(t)&=A_mx(t)+A_{\mathrm{sp}}\tilde{x}(t)+b\big(u(t-\hat{\tau})+\hat{\theta}^\top(t)x(t)+\hat{\sigma}(t)\big)
	\end{align}
	and $\hat{x}(0)=x_{0}$. Here, the positive scalars $\Gamma>0$ and $k$ correspond to adaptive gain and the bandwidth of the first-order low-pass filter $k/(s+k)$. Moreover, $A_{\mathrm{sp}}$ is a Hurwitz matrix, which may be tuned to reduce any noise in the state predictor $\hat{x}$. The projection operators $\textbf{Proj}(\cdot,\cdot;\cdot,\cdot)$ are implemented in terms of the bounds $\theta_b$ and $\sigma_b$, the tolerance $\nu$, and the positive-definite matrix $P$, obtained as the solution to the Lyapunov equation \mbox{$A_{\mathrm{sp}}^\top P+PA_{\mathrm{sp}}=-\mathbb{I}$}. The implementation ensures that $\|\hat{\theta}(t)\|_\infty\leq\theta_b$ and $\|\hat{\sigma}(t)\|_\infty\leq\bar\sigma_b$ provided that $\hat{\theta}_0$ and $\hat{\sigma}_0$ satisfy these same bounds. The relation between $\sigma_b$ and $\bar\sigma_b$ will be defined later in the paper.
	
	The modification to the ${\cal L}_1$ adaptive control framework is the introduction of $\hat{\tau}$ in \eqref{8sp}. When  $\hat{\tau}=0$, the controller composed by \eqref{8controllaw}, \eqref{8adlaws}, \eqref{8adlaws2}, and \eqref{8sp} recovers the exact form of the framework proposed in \cite{Hovakimyan2010book}. 
	
	%In the rest of the paper, we show that the delay treatment, $\hat\tau$, when appropriately tuned, will significantly enhance the controller's ability to tolerate input delay. We will also demonstrate the dependence of the input delay margin on the delay treatment. This information suggest a way to select the value of $\hat{\tau}$ to achieve better input delay robustness.
	
	\section{Transient performance}
	\label{sec:trans}
	In this section, we analyze the transient performance of the closed-loop adaptive control system. The theorem below states that if $\tau\in[0,\tau_s]\mbox{ and }\hat{\tau}\in[\tau-\underline{\delta}, \tau+\bar {\delta}]$, and the bandwidth $k$, the adaptive gain $\Gamma$, and the bounds $\theta_b$ and $\bar\sigma_b$ are chosen appropriately, then the state and control input of the closed-loop control system governed by \eqref{inpdelay} and \eqref{8controllaw}-\eqref{8sp} follow those of the reference system closely. In particular,
	
	\begin{thm}
		\label{8thm2}
		\textit{Suppose $k$ is chosen as in Lemma~\ref{lem:stbcnd} and assume that $\tau\in[0,\tau_s]\mbox{ and }\hat{\tau}\in[\tau-\underline{\delta}, \tau+\bar {\delta}]$ such that \eqref{8dm} is satisfied. Then, there exists a $C>0$, such that, for $\psi\ll 1$, $\| \tilde{x}\|_{{\cal L}_{\infty}}\leq \psi$ and
			\begin{align}
				\label{8thmbs}
				\|x_\mathrm{ref}&-x\|_{{\cal L}_{\infty}},~
				\|u_\mathrm{ref}-u\|_{{\cal L}_{\infty}}
			\end{align}
			are bounded from above by $b_r\psi$ and $b_u\psi$ for some positive constants $b_r$ and $b_u$, provided that $\Gamma\psi^2\ge C$. }
	\end{thm}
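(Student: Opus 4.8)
The plan is to follow the two–stage structure of the transient-bound proof of \cite{Nguyen2015tdm}: first bound the prediction error $\tilde x$ in terms of the adaptive gain, then convert that bound into closeness of $(x,u)$ to $(x_\mathrm{ref},u_\mathrm{ref})$ through the same input--output maps $F$ and $\Phi$ that define the reference system, and finally tie the two estimates together by a continuity (bootstrapping) argument.

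For the prediction-error bound, I would set $\tilde\theta:=\hat\theta-\theta$ and $\tilde\sigma:=\hat\sigma-\sigma$ and subtract \eqref{inpdelay} from \eqref{8sp} to get $\dot{\tilde x}=A_{\mathrm{sp}}\tilde x+b\big(u(t-\hat\tau)-u(t-\tau)\big)+b\tilde\theta^\top x+b\tilde\sigma$ with $\tilde x(0)=0$. With $V=\tilde x^\top P\tilde x+\Gamma^{-1}(\tilde\theta^\top\tilde\theta+\tilde\sigma^2)$, differentiating and using $A_{\mathrm{sp}}^\top P+PA_{\mathrm{sp}}=-\mathbb{I}$ together with the defining inequality of the projection operators in \eqref{8adlaws}--\eqref{8adlaws2} cancels the cross terms $\tilde x^\top Pb\,\tilde\theta^\top x$ and $\tilde x^\top Pb\,\tilde\sigma$. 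The one feature absent from the standard $\mathcal{L}_1$ analysis is the delay--mismatch term $u(t-\hat\tau)-u(t-\tau)$. I would absorb it into the $\hat\sigma$ channel: treating $\sigma-\big(u(t-\hat\tau)-u(t-\tau)\big)$ as the effective uncertainty and choosing $\bar\sigma_b$ large enough that it stays inside the projection bound whenever $u$ is bounded (this is the promised relation between $\sigma_b$ and $\bar\sigma_b$), the projection inequality now cancels the mismatch as well, leaving only a residual of order $\Gamma^{-1}$ times the derivative of the mismatch, which is bounded once $\dot u$ is. This restores $\dot V\le-\|\tilde x\|^2+c/\Gamma$ and hence $\|\tilde x\|_{\mathcal{L}_\infty}=\mathcal{O}(\Gamma^{-1/2})$; taking $C$ to be the resulting constant, $\Gamma\psi^2\ge C$ forces $\|\tilde x\|_{\mathcal{L}_\infty}\le\psi$.

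For the tracking errors $e_x:=x_\mathrm{ref}-x$ and $e_u:=u_\mathrm{ref}-u$, I would exploit that the plant and the reference system share the same $A_m,b,\theta,\sigma,\tau$ and initial data, so subtraction removes the unknowns and leaves $e_x=H(s)\big(\mathrm{e}^{-\tau s}e_u+\mathcal{L}[\theta^\top e_x]\big)$. Laplace-transforming the prediction-error dynamics expresses the estimate combination $\hat\theta^\top x+\hat\sigma$ in terms of the truth $\theta^\top x+\sigma$, the input mismatch, and $(s\mathbb{I}-A_{\mathrm{sp}})\tilde x$; substituting this into \eqref{8controllaw} and solving for $u$ reproduces exactly the map $F(s;\tau,\hat\tau)$ of \eqref{8urefLap}, giving $e_u=F(s;\tau,\hat\tau)\mathcal{L}[\theta^\top e_x]+\mathcal{O}(\psi)$ once the $y_d$ contributions, identical in the two control laws by construction, cancel. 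Feeding this back produces $e_x=\Phi(s;\tau,\hat\tau)\mathcal{L}[\theta^\top e_x]+\mathcal{O}(\psi)$; taking $\mathcal{L}_1/\mathcal{L}_\infty$ norms and using that $F$ and $\Phi$ are exponentially stable on the admissible interval (Lemma~\ref{lem:stbcnd}) yields $\|e_x\|_{\mathcal{L}_\infty}\le f(\tau,\hat\tau)\theta_b\|e_x\|_{\mathcal{L}_\infty}+\mathcal{O}(\psi)$. The stability condition \eqref{8dm}, $f(\tau,\hat\tau)\theta_b<1$, then lets me solve for $\|e_x\|_{\mathcal{L}_\infty}\le b_r\psi$, exactly as in \eqref{8orgineq1}, and back-substitution gives $\|e_u\|_{\mathcal{L}_\infty}\le b_u\psi$, with $b_r,b_u$ assembled from the finite $\mathcal{L}_1$ norms and the factor $1/(1-f\theta_b)$.

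The circularity is that the prediction-error stage needs $u$ (hence the mismatch and $\dot u$) bounded a priori, while that boundedness is only a consequence of the tracking stage. I would resolve it by a continuity argument: since $\tilde x(0)=0$ and the bounds hold with margin at $t=0$, let $t^\ast$ be the first time a tracking bound is attained with equality; on $[0,t^\ast]$ the a priori bounds make $u$ bounded by $\|u_\mathrm{ref}\|_{\mathcal{L}_\infty}+b_u\psi$, with $\|u_\mathrm{ref}\|_{\mathcal{L}_\infty}$ controlled through \eqref{8eq:ubound}, which validates the two stages on that interval and forces strict inequalities at $t^\ast$, contradicting its definition. I expect the delay--mismatch handling to be the main obstacle: one must show that $\bar\sigma_b$ can be fixed, and $C$, $b_r$, $b_u$ chosen, in a mutually consistent order, since $\bar\sigma_b$ enters the constant $C$ that governs the $\tilde x$ bound while itself depending on the not-yet-established bound on $u$.
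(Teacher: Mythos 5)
Your proposal follows essentially the same route as the paper: the delay mismatch $u(t-\tau)-u(t-\hat\tau)$ is absorbed into an effective disturbance $\bar\sigma$ whose bound $\bar\sigma_b$ sizes the projection set, a Lyapunov/projection argument gives $\|\tilde x\|_{\mathcal{L}_\infty}=\mathcal{O}(\Gamma^{-1/2})$ (the paper's Lemma~\ref{8lem:Lyap}), the error system is closed through $F(s;\tau,\hat\tau)$ and $\Phi(s;\tau,\hat\tau)$ with $\tilde\eta(s)=b^\ast(s\mathbb{I}-A_{\mathrm{sp}})\tilde x(s)$ (the paper's Lemma~\ref{8lem:etatb}), and the small-gain condition \eqref{8dm} plus a continuity bootstrap on $[0,t_1]$ finishes the argument. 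The consistency issue you flag about fixing $\bar\sigma_b$, $C$, $b_r$, $b_u$ in the right order is exactly what the paper resolves by positing the a priori bounds \eqref{8rubnds} in Lemma~\ref{8lem:Lyap} with constants independent of $t_1$, so no gap remains.
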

	
	Before proving this theorem, we show that the state predictor tracks the system state with the estimation error inversely proportional to the square root of the adaptive gain.
	\begin{lem}
		\label{8lem:Lyap}
		Suppose that
		\begin{equation}
			\label{8rubnds}
			\|x_{t_1}\|_{{\cal L}_\infty}< \rho_\mathrm{ref}+1,~~\|u_{t_1}\|_{{\cal L}_\infty}<\rho_\mathrm{u}<\infty
		\end{equation}
		for some $t_1$ and $\rho_\mathrm{u}$. Then, there exists a $C>0$, which is independent of $t_1$, such that
		\begin{equation}
			\| \tilde{x}_{t_1}\|_{{\cal L}_{\infty}}\leq \sqrt{C/\Gamma}.
		\end{equation}
	\end{lem}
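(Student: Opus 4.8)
The plan is to derive the dynamics of the estimation error $\tilde{x}=\hat{x}-x$ and reduce them, by a change of disturbance variable, to the error system of the unmodified ${\cal L}_1$ architecture, where a quadratic Lyapunov function delivers the $\mathcal{O}(\Gamma^{-1/2})$ bound. Subtracting \eqref{inpdelay} from \eqref{8sp} gives $\dot{\tilde{x}}(t)=A_{\mathrm{sp}}\tilde{x}(t)+b\big(\tilde\theta^\top(t)x(t)+\hat\sigma(t)-\sigma(t)+u(t-\hat\tau)-u(t-\tau)\big)$ with $\tilde\theta:=\hat\theta-\theta$. The term $u(t-\hat\tau)-u(t-\tau)$ is the only feature absent from the delay-free framework, and the first step is to fold it into an effective disturbance $\sigma'(t):=\sigma(t)+u(t-\tau)-u(t-\hat\tau)$, so that the error dynamics become $\dot{\tilde{x}}=A_{\mathrm{sp}}\tilde{x}+b(\tilde\theta^\top x+\tilde\sigma')$ with $\tilde\sigma':=\hat\sigma-\sigma'$. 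Because $\|u_{t_1}\|_{{\cal L}_\infty}<\rho_\mathrm{u}$ by \eqref{8rubnds}, one has $|u(t-\tau)-u(t-\hat\tau)|\le 2\rho_\mathrm{u}$ on $[0,t_1]$, so choosing $\bar\sigma_b\ge\sigma_b+2\rho_\mathrm{u}$ — the relation between $\sigma_b$ and $\bar\sigma_b$ anticipated in Section~\ref{8sec:transbnd} — guarantees $|\sigma'(t)|\le\bar\sigma_b$, i.e.\ $\sigma'$ is a feasible comparison point for the $\hat\sigma$-projection.

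Next I would introduce $V(t)=\tilde{x}^\top(t)P\tilde{x}(t)+\Gamma^{-1}\big(\tilde\theta^\top(t)\tilde\theta(t)+(\tilde\sigma'(t))^2\big)$, with $P$ solving $A_{\mathrm{sp}}^\top P+PA_{\mathrm{sp}}=-\mathbb{I}$. Differentiating and using this Lyapunov equation produces the decay term $-\tilde{x}^\top\tilde{x}$ together with the cross terms $2\tilde{x}^\top Pb\,\tilde\theta^\top x$ and $2\tilde{x}^\top Pb\,\tilde\sigma'$. The adaptive laws \eqref{8adlaws}--\eqref{8adlaws2} are designed precisely so that the projection inequality $\tilde\theta^\top\big(\textbf{Proj}(\hat\theta,y)-y\big)\le 0$, applied with $y=-\tilde{x}^\top Pb\,x$, and the scalar analogue for $\hat\sigma$ at the admissible point $\sigma'$, cancel these cross terms against $\Gamma^{-1}\tilde\theta^\top\dot{\hat\theta}$ and $\Gamma^{-1}\tilde\sigma'\dot{\hat\sigma}$. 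What survives is $\dot V\le-\tilde{x}^\top\tilde{x}-2\Gamma^{-1}\big(\tilde\theta^\top\dot\theta+\tilde\sigma'\dot\sigma'\big)$.

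To close the estimate I would bound the residual term uniformly in $t_1$ and $\Gamma$. The projection keeps $\|\tilde\theta\|\le 2\theta_b$ and $|\tilde\sigma'|\le 2\bar\sigma_b$; the uncertainties $\theta,\sigma$ are taken to have bounded derivatives (standard in this framework); and $\dot\sigma'=\dot\sigma+\dot u(t-\tau)-\dot u(t-\hat\tau)$ is bounded because \eqref{8controllaw} gives $|\dot u|\le k\big(\rho_\mathrm{u}+\theta_b(\rho_\mathrm{ref}+1)+\bar\sigma_b+k_d\|y_d\|_{{\cal L}_\infty}\big)$ on $[0,t_1]$ by the hypotheses \eqref{8rubnds}. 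Hence $\dot V\le-\tilde{x}^\top\tilde{x}+c/\Gamma$ for a constant $c$ depending only on these bounds, not on $t_1$ or $\Gamma$. Using $\tilde{x}^\top\tilde{x}\ge\lambda_{\max}^{-1}(P)\big(V-\Gamma^{-1}(\|\tilde\theta\|^2+(\tilde\sigma')^2)\big)$ together with $\tilde{x}(0)=0$, so $V(0)=\mathcal{O}(\Gamma^{-1})$, a standard comparison argument yields $V(t)\le C'/\Gamma$ on $[0,t_1]$, whence $\|\tilde{x}(t)\|^2\le\lambda_{\min}^{-1}(P)V(t)\le C/\Gamma$ and therefore $\|\tilde{x}_{t_1}\|_{{\cal L}_\infty}\le\sqrt{C/\Gamma}$ with $C$ independent of $t_1$.

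The main obstacle is exactly the delay-mismatch term $u(t-\hat\tau)-u(t-\tau)$: unlike in the delay-free architecture it does not vanish, and if left inside the error dynamics it contributes an $\mathcal{O}(1)$ term to $\dot V$ that would degrade the bound from $\mathcal{O}(\Gamma^{-1/2})$ to $\mathcal{O}(1)$. The device of absorbing it into $\sigma'$ succeeds only if two conditions hold simultaneously — that $\bar\sigma_b$ is enlarged enough for $\sigma'$ to remain a feasible comparison point for the projection, and that $\dot\sigma'$, hence $\dot u$, is bounded uniformly in $t_1$ and $\Gamma$. Verifying the latter, which relies on the controller \eqref{8controllaw} producing a control rate bounded independently of the adaptive gain, is the step that must be carried out with care.
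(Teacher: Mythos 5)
Your proposal follows essentially the same route as the paper's proof: the effective disturbance $\sigma'$ is exactly the paper's $\bar\sigma(t)=\sigma(t)+u(t-\tau)-u(t-\hat\tau)$, the Lyapunov function and projection-based cancellation are identical, and the uniform bound on $\dot u$ from \eqref{8controllaw} under \eqref{8rubnds} plays the same role in bounding $\dot{\bar\sigma}$ independently of $t_1$. The only cosmetic difference is that you invoke a standard comparison argument where the paper carries out the equivalent ultimate-bound step by contradiction; the conclusion and constants are the same.
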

	
	\begin{proof}
		From \eqref{inpdelay} and \eqref{8sp}, we get
		\begin{align}
			\label{8esterror}
			\dot{\tilde{x}}(t)=A_{\mathrm{sp}}\tilde{x}(t)+b\big(\tilde{\theta}^\top(t)x(t)+\tilde{\sigma}(t)\big), ~\tilde{r}(0)=0,
		\end{align}
		where $\tilde{\theta}:=\hat{\theta}-\theta$, $\tilde{\sigma}:=\hat{\sigma}-\bar\sigma$, and
		\begin{equation}
			\bar\sigma(t)=\sigma(t)+u(t-\tau)-u(t-\hat{\tau}).
		\end{equation}
		It follows from the assumption on $u$ in \eqref{8rubnds} and the bound on $\sigma$ that 
		\begin{align}
			|\bar\sigma(t)|\le\sigma_b+2\rho_u=:\bar{\sigma}_b
		\end{align}
		with $\bar\sigma_b$ independent of $t_1$. Moreover,
		\begin{align}
			\label{8sigmabardot}
			\dot{\bar{\sigma}}(t)=\dot{\sigma}(t)+\dot{u}(t-\tau)-\dot u(t-\hat{\tau}).
		\end{align}
		By the assumptions on $x(t)$ and $u(t)$ in \eqref{8rubnds}, equations \eqref{inpdelay} and \eqref{8controllaw}-\eqref{8adlaws2} imply that $\dot{r}$ and $\dot{u}$ are bounded by constants independent of $t_1$. Hence, $\|\dot{\bar{\sigma}}(t)\|_\infty\leq d_{\bar{\sigma}}$, with $d_{\bar{\sigma}}$ independent of $t_1$.
		
		Consider the Lyapunov function candidate
		\begin{equation}
			V(t)=\tilde{x}^\top (t)P\tilde{x}(t)+\frac{1}{\Gamma}\big(\tilde{\theta}^\top(t)\tilde{\theta}(t)+\tilde{\sigma}^\top(t)\tilde{\sigma}(t)\big).
		\end{equation}
		By the properties of the projection operators,
		\begin{align}
			\label{8vdotineq}
			\dot{V}(t)&\leq-\tilde{x}^\top(t) \tilde{x}(t)+\frac{2}{\Gamma}\big|\tilde{\theta}^\top(t)\dot{\theta}(t)+\tilde{\sigma}^\top(t)\dot{\bar{\sigma}}(t)\big|\nonumber\\
			&\leq-\|\tilde{x}\|_2^2 +\frac{4}{\Gamma}\big(\theta_b d_\theta+\bar\sigma_b d_{\bar\sigma}\big).
		\end{align}
		
		We have
		\begin{equation}
			V(0)\le\frac{4}{\Gamma}\left(\theta_b^2+\bar\sigma_b^2\right)<\frac{\nu_m}{\Gamma},
		\end{equation}
		where $\nu_m:= 4(\theta_b^2+\bar\sigma_b^2)+4\lambda_{\mathrm{max}}(P)\big(\theta_b d_\theta+\bar\sigma_b d_{\bar\sigma}\big)$. We now show by contradiction that
		\begin{align}
			\label{8vineq}
			V(t)\leq \frac{\nu_m}{\Gamma},~~\forall t\in [0,t_1].
		\end{align}
		To this end, suppose that $V(\bar{t})>\nu_m/\Gamma$ and $\dot{V}(\bar{t})\ge 0$ for some $\bar{t}<t_1$. It follows that
		\begin{align}
			\frac{\nu_m}{\Gamma}<V(\bar t)&\leq\|\tilde{x}(\bar t)\|_2^2 \lambda_{\mathrm{max}}(P)+\frac{4}{\Gamma}\big(\theta_{b}^2+\bar{\sigma}_{b}^2\big).\nonumber
		\end{align}
		Hence,
		\begin{align}
			\label{8r2}
			\|\tilde{x}(\bar{t})\|_2^2>\frac{4}{\Gamma }\big(\theta_b d_\theta+\bar\sigma_b d_{\bar\sigma}\big).
		\end{align}
		
		By substituting \eqref{8r2} in \eqref{8vdotineq} we have $\dot{V}(\bar{t})<0$, which contradicts the assumption that $\dot{V}(\bar{t})\geq0$. Thus, $V(t)\leq \frac{\nu_m}{\Gamma}$ for all $t\in [0,t_1]$. Consequently,
		
		\begin{eqnarray}
			\label{8rtbtau}
			\|\tilde{x}_{t_1}\|_{{\cal L}_\infty}\leq \sqrt{\frac{\nu_m}{\lambda_{\mathrm{min}}(P)\Gamma}}\,.
		\end{eqnarray}
		The claim then follows.
	\end{proof}
	
	\begin{lem}
		\label{8lem:etatb}
		\textit{Let $\tilde\eta(t):= \tilde{\theta}^\top(t)x(t)+\tilde{\sigma}(t)$. When $\hat\tau\in[\tau-\underline{\delta}, \tau+\bar {\delta}]$, there exists a constant $b_0$ such that
			\begin{align}
				\| F(s;\tau,\hat\tau)\tilde\eta(s)\|_{{\cal L}_\infty}\leq b_0 \|\tilde x\|_{{\cal L}_\infty}.
			\end{align}
		}
	\end{lem}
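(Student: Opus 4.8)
The plan is to convert the estimation-error equation \eqref{8esterror} into a stable, proper map carrying $\tilde x$ to the filtered signal $F(s;\tau,\hat\tau)\tilde\eta(s)$, and then to read off $b_0$ as the ${\cal L}_1$ norm of that map. Since $\tilde x(0)=0$, taking Laplace transforms in \eqref{8esterror} yields $\tilde x(s)=H_{\mathrm{sp}}(s)\tilde\eta(s)$, where $H_{\mathrm{sp}}(s):=(s\mathbb{I}-A_{\mathrm{sp}})^{-1}b$. First I would fix a constant vector $c_0$ with $c_0^\top b\neq 0$ and such that the scalar transfer function $C_1(s):=c_0^\top H_{\mathrm{sp}}(s)$ is minimum phase; such a $c_0$ exists because $A_{\mathrm{sp}}$ is a free Hurwitz design matrix that may be chosen so that $(A_{\mathrm{sp}},b)$ is controllable, in which case the numerator of $c_0^\top(s\mathbb{I}-A_{\mathrm{sp}})^{-1}b$ can be assigned to be any Hurwitz polynomial of degree $n-1$. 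Left-multiplying $\tilde x(s)=H_{\mathrm{sp}}(s)\tilde\eta(s)$ by $c_0^\top$ gives $c_0^\top\tilde x(s)=C_1(s)\tilde\eta(s)$, hence $\tilde\eta(s)=C_1^{-1}(s)c_0^\top\tilde x(s)$ and therefore
\[
F(s;\tau,\hat\tau)\tilde\eta(s)=G(s;\tau,\hat\tau)\tilde x(s),\qquad G(s;\tau,\hat\tau):=F(s;\tau,\hat\tau)C_1^{-1}(s)c_0^\top .
\]

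The crux is to show that $G(s;\tau,\hat\tau)$ has finite ${\cal L}_1$ norm, uniformly for $\hat\tau\in[\tau-\underline{\delta},\tau+\bar{\delta}]$. Because $C_1$ has relative degree one, its inverse $C_1^{-1}$ is improper by exactly one degree; however, $F(s;\tau,\hat\tau)$ is strictly proper and behaves like $-k/s$ as $s\to\infty$, the delay terms being bounded on the imaginary axis, so the product $FC_1^{-1}$ is proper, retaining only a bounded direct-feedthrough term at infinity. The singularities of $G$ are the poles of $F(s;\tau,\hat\tau)$, which are exponentially stable throughout the delay interval by the continuity argument following \eqref{8urefLap}, together with the zeros of $C_1$, which lie in the open left half-plane by the minimum-phase choice of $c_0$. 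Hence $G(s;\tau,\hat\tau)$ is exponentially stable with finite ${\cal L}_1$ norm; arguing as for \eqref{8gdef} and \eqref{8fd}, this norm is continuous in $\hat\tau$ on the compact interval $[\tau-\underline{\delta},\tau+\bar{\delta}]$, so I would set $b_0:=\max_{\hat\tau\in[\tau-\underline{\delta},\tau+\bar{\delta}]}\|G(s;\tau,\hat\tau)\|_{{\cal L}_1}<\infty$.

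The claim then follows from the standard ${\cal L}_1$/${\cal L}_\infty$ inequality,
\[
\|F(s;\tau,\hat\tau)\tilde\eta\|_{{\cal L}_\infty}=\|G(s;\tau,\hat\tau)\tilde x\|_{{\cal L}_\infty}\le\|G(s;\tau,\hat\tau)\|_{{\cal L}_1}\|\tilde x\|_{{\cal L}_\infty}\le b_0\|\tilde x\|_{{\cal L}_\infty}.
\]
The main obstacle I anticipate is precisely the improperness of $C_1^{-1}$: the whole estimate hinges on the relative-degree-one structure of $F(s;\tau,\hat\tau)$ cancelling the extra degree of $C_1^{-1}$, so that the product is proper with finite ${\cal L}_1$ norm, and on securing the minimum-phase property of $C_1$ so that the inversion introduces no unstable poles. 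Establishing these two structural facts, rather than the final norm bound, is where the real work lies.
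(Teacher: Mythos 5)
Your argument is sound in outline and reaches the right conclusion, but it takes a genuinely different and more elaborate route than the paper. The paper never inverts a transfer function: left-multiplying the error dynamics \eqref{8esterror} by $b^\top$ and dividing by the scalar $b^\top b\neq 0$ gives the \emph{exact} polynomial relation $\tilde\eta(s)=b^\ast(s\mathbb{I}-A_{\mathrm{sp}})\tilde x(s)$ with $b^\ast:=(b^\top b)^{-1}b^\top$, so that $F(s;\tau,\hat\tau)\tilde\eta(s)=sF(s;\tau,\hat\tau)\,b^\ast\tilde x(s)-F(s;\tau,\hat\tau)\,b^\ast A_{\mathrm{sp}}\tilde x(s)$; the only analytic fact then needed is that $\|sF(s;\tau,\hat\tau)\|_{{\cal L}_1}$ is finite, which follows from the identity $sF=-k\big(1+(\mathrm{e}^{-\tau s}-\mathrm{e}^{-\hat\tau s}+1)F\big)$ and the already-established boundedness of $\|F\|_{{\cal L}_1}$. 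Your version replaces this algebraic elimination by extraction of a scalar output $c_0^\top\tilde x$ and inversion of $C_1(s)=c_0^\top(s\mathbb{I}-A_{\mathrm{sp}})^{-1}b$, which is why you must then fight for the minimum-phase property of $C_1$ --- precisely the ``real work'' you identify, and work the paper's route avoids entirely, since the zeros of $C_1$ never enter its estimate. Your existence argument for $c_0$ is also the weak link: you justify it by choosing $A_{\mathrm{sp}}$ so that $(A_{\mathrm{sp}},b)$ is controllable, an extra design restriction the paper does not impose (and one violated by the paper's own numerical choice $A_{\mathrm{sp}}\propto\mathbb{I}$, although in that special case $C_1$ has no zeros and your construction still goes through); a fully general argument would have to note that for Hurwitz $A_{\mathrm{sp}}$ the fixed zeros contributed by uncontrollable modes are themselves stable. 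What your approach buys is a single stable, (bi)proper map $G(s;\tau,\hat\tau)$ from $\tilde x$ to $F\tilde\eta$ and an explicit $b_0=\max_{\hat\tau}\|G\|_{{\cal L}_1}$; what the paper's buys is brevity and freedom from any assumption beyond $b\neq 0$. Both correctly exploit the same structural fact, namely that the relative degree one of $F$ absorbs the one degree of improperness in the map recovering $\tilde\eta$ from $\tilde x$.
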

	
	\begin{proof}
		From \eqref{8esterror} we have
		\begin{align}
			b^\top\dot{\tilde{x}}=b^\top A_{\mathrm{sp}}\tilde{x}+b^\top b\tilde\eta
		\end{align}
		and, consequently,
		\begin{align}
			\label{8etatildebound}
			\tilde\eta(s)=b^\ast(s\mathbb{I}-A_{\mathrm{sp}})\tilde{x}(s)
		\end{align}
		where the pseudoinverse $b^\ast=(b^\top b)^{-1}b^\top$, since $b^\top b$ is non-zero. Moreover, from \eqref{8urefLap}, it follows that
		\begin{align}
			s F(s;\tau,\hat\tau)=-k\Big(1+\big(\mbox{e}^{-\tau s}-\mbox{e}^{-\hat\tau s}+1\big) F(s;\tau,\hat\tau)\Big)
		\end{align}
		Since  $\| F(s;\tau,\hat\tau)\|_{{\cal L}_1}$ is bounded for $\hat\tau\in[\tau-\underline \tau,\tau+\bar \tau]$, it follows that the norm $\|s F(s;\tau,\hat\tau)\|_{{\cal L}_1}$ is bounded. Thus, \eqref{8etatildebound} yields
		\begin{align}
			\| F(s;\epsilon)\tilde\eta(s)\|_{{\cal L}_\infty}&\leq \|sF(s;\epsilon)b^\ast \tilde{x}(s)\|_{{\cal L}_\infty}+\|F(s;\epsilon)b^\ast A_\mathrm{sp} \tilde{x}(s)\|_{{\cal L}_\infty}
		\end{align}
		and the claim follows.
	\end{proof}
	
	We proceed to prove Theorem~\ref{8thm2}.
	
	\begin{proof}
		Since
		\begin{equation}
			\|x_\mathrm{ref}(0)-x(0)\|_\infty=0<1,~~\|u_\mathrm{ref}(0)-u(0)\|_\infty=0,
		\end{equation}
		it follows by continuity that there exists a $t_1>0$, such that $\|(x_\mathrm{ref}-x)_{t_1}\|_{{\cal L}_\infty}<1$ and $\|(u_\mathrm{ref}-u)_{t_1}\|_{{\cal L}_\infty}<\infty$. Theorem~\ref{8thm1} then leads to \eqref{8rubnds}. It follows that
		\begin{equation}
			\| \tilde{x}_{t_1}\|_{{\cal L}_{\infty}}\leq \sqrt{C/\Gamma}.
		\end{equation}
		for some $C>0$, which is independent of $t_1$.
		
		Next, we write the control law in \eqref{8controllaw} in a similar form as in the control law \eqref{uref} of the reference system:
		\begin{align}
			\dot{u}(t)&=-k\Big(u(t-\tau)-u(t-\hat{\tau})+u(t)\nonumber\\
			&+\big(\hat{\theta}(t)-\theta(t)\big)^\top x(t)+\hat{\sigma}(t)\underbrace{-\sigma(t)-u(t-\tau)+u(t-\hat{\tau})}_{-\bar{\sigma}}+\underbrace{\theta^\top(t)x(t)+\sigma(t)}_{\eta(t)}+k_dy_d(t)\Big)
		\end{align}
		and, consequently,
		\begin{align}
			\label{8u2}
			u(s)&= F(s;\tau,\hat{\tau})\big(\tilde \eta(s)+\eta(s)+k_dy_d(s)\big).
		\end{align}
		It follows from \eqref{8usdef} and \eqref{8u2}  that
		\begin{equation}
			\label{8deltau}
			u_\mathrm{ref}(s)-u(s)= F(s;\tau,\hat{\tau})\big(\eta_\mathrm{ref}(s)-\eta(s)-\tilde \eta(s)\big).
		\end{equation}
		It further follows from the definitions of $\eta_\mathrm{ref}$ and $\eta$ that
		\begin{equation}
			\label{8etabound}
			\|(\eta_\mathrm{ref}-\eta)_{t_1}\|_{{\cal L}_{\infty}}\leq \theta_b\|(x_\mathrm{ref}-x)_{t_1}\|_{{\cal L}_{\infty}}
		\end{equation}
		and, using Lemma~\ref{8lem:etatb},
		\begin{align}
			\label{8deltaunorm}
			\|(u_\mathrm{ref}&-u)_{t_1}\|_{{\cal L}_\infty}\leq g(\tau,\hat{\tau}) \theta_b\|(x_\mathrm{ref}-x)_{t_1}\|_{{\cal L}_{\infty}}+b_0\| \tilde{x}_{t_1}\|_{{\cal L}_{\infty}}.
		\end{align}
		
		From \eqref{inpdelay}, we now obtain
		\begin{align}
			\label{8deltar}
			x_\mathrm{ref}(s)-x(s)&= H(s)\Big(\mbox{e}^{-\tau s}\big(u_\mathrm{ref}(s)-u(s)\big)+\eta_\mathrm{ref}(s)-\eta(s)\Big).
		\end{align}
		Together with \eqref{8deltau}--\eqref{8etabound} and Lemma~\ref{8lem:etatb}, this results in the bound
		\begin{align}
			\label{8r_refr_norm}
			\|(x_\mathrm{ref}&-x)_{t_1}\|_{{\cal L}_{\infty}}\leq f(\tau,\hat{\tau}) \theta_b\|(x_\mathrm{ref}-x)_{t_1}\|_{{\cal L}_{\infty}}+b_2\| \tilde{x}_{t_1}\|_{{\cal L}_{\infty}},
		\end{align}
		where $b_2:= b_0\| H(s)\mbox{e}^{-\tau s}\|_{{\cal L}_1}$, which is finite for $\hat{\tau}\in[\tau-\underline{\delta}, \tau+\bar {\delta}]$.

		When $\tau\in[0,\tau_s]\mbox{ and }\hat{\tau}\in[\tau-\underline{\delta}, \tau+\bar {\delta}]$, the stability condition in \eqref{8dm} holds and implies that $1-f(\tau,\hat{\tau}) \theta_b>0$. Thus, from \eqref{8deltaunorm} and \eqref{8r_refr_norm}, we conclude that
		\begin{align}
			\label{8r_refr_norm1}
			\|(x_\mathrm{ref}-x)_{t_1}\|_{{\cal L}_{\infty}}\leq\frac{b_2}{1-f(\tau,\hat{\tau}) \theta_b}\| \tilde{x}_{t_1}\|_{{\cal L}_{\infty}}
		\end{align}
		and
		\begin{equation}
			\|(u_\mathrm{ref}-u)_{t_1}\|_{{\cal L}_\infty}\leq\left(\frac{g(\tau,\hat{\tau})b_2\theta_b}{1-f(\tau,\hat{\tau}) \theta_b}+b_0\right)\| \tilde{x}_{t_1}\|_{{\cal L}_{\infty}}.
		\end{equation}
		The claim then follows by choosing $\psi$ and $\Gamma$ such that the right-hand side of \eqref{8r_refr_norm1} is strictly less than 1 and $\sqrt{C/\Gamma}<\psi$.
	\end{proof}
	
	In the next section, we will demonstrate the destabilizing effect of input delay $\tau$ as well as the efficacy of $\hat{\tau}$. In addition, we use Pad\'{e} approximants to visualize the stability condition in \eqref{8dm} and to estimate ranges $\tau_s$ and $\underline{\delta}$, and $\bar{\delta}$ for the time delay margin by monitoring the stability condition in \eqref{8dm} when the delay is being gradually increased.
	\section{Numerical Example}
	\label{8sec:num}
	\subsection{Simulation}
	Consider the controller with the delay compensation in \eqref{8controllaw}-\eqref{8sp} applied to system \eqref{inpdelay} with (cf.~\cite{Hovakimyan2010book})
	\begin{align}
		\label{param}
		A_m&=\left( \begin{array}{cc}
			0 & 1 \\
			-1 & -1.4 \end{array} \right), ~~b=\left( \begin{array}{c}
			0 \\
			1\end{array} \right),~c=\left( \begin{array}{c}
			1 \\
			0\end{array} \right)\nonumber\\
		\theta(t)&=[0.5+\cos(\pi t), ~~1+0.3\sin(\pi t)+0.2 \cos(2t)]^\top,\nonumber\\
		\sigma(t)&=\sin(0.5\pi t).
	\end{align}
	Since it is trivial to see that $\|\theta(t)\|_2<2$, we set $\theta_b=2$. The control parameters are set to: 
	$\Gamma=10^7$, $k=25$, $\bar\sigma_b=100$, $\hat\theta_0=
	\hat\sigma_0=0$, $\nu=0.1$, and $A_\mathrm{sp}=100\mathbb{I}$. The states of the actual adaptive system and the desired system are initialized at $x_0=[0,~1]^\top$ and $x_{\mathrm{des},0}=[1,~0]^\top$, respectively. The system output is tasked to track the output of a desired system $y_\mathrm{des}$ with $y_d=\cos (2t/\pi)$.
	
	We first set the input delay $\tau=0.06$ and turn off the delay compensation, i.e., $\hat{\tau}=0$. Figure~\ref{fig:delayp5} shows the result of the simulation. We can see that the output $y(t)$ is able to track closely the output of the desired system $y_\mathrm{des}$. Even without the delay compensation, the ${\cal L}_1$ controller is able to tolerate some small input delay in the limit of large adaptive gain. This is consistent with the fact that the ${\cal L}_1$ controller is robust with fast adaptation as widely reported in the literature. The observed deviation between the two trajectories is traced back to the need to keep a finite filter bandwidth $k$ in order to maintain system robustness. In the absence of the input delay, the deviation gets smaller when $k$ is increased. This agrees with the result in \eqref{invproptok}. However, increasing $k$, while improving tracking performance, deteriorates the controller's delay robustness. Therefore, there is a trade-off between tracking performance and system robustness in the ${\cal L}_1$ control framework.
	\begin{figure}[ht]
		\centering
		\includegraphics[width=.9\textwidth]{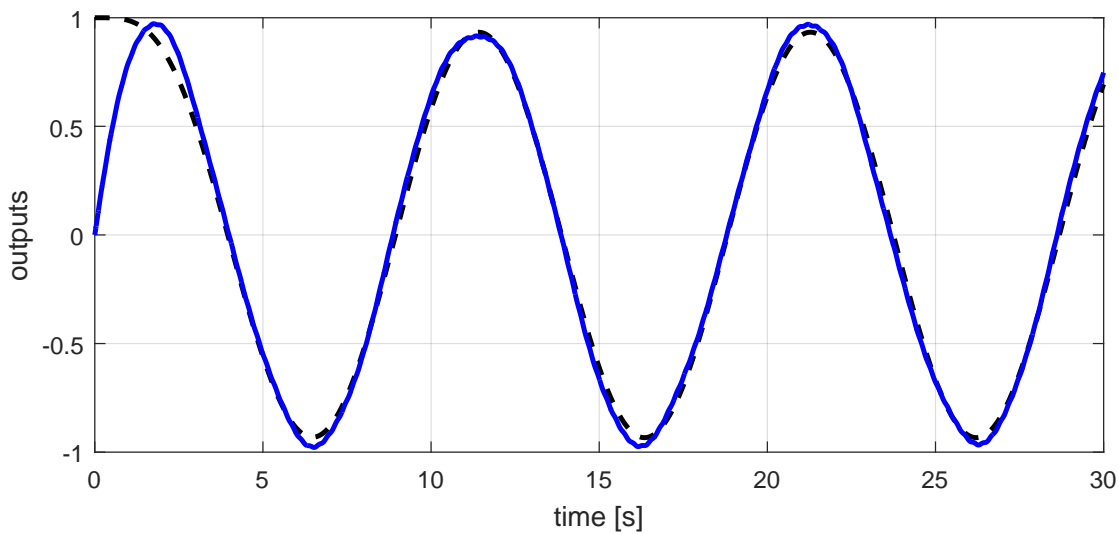}
		\caption{Satisfying response with input delay $\tau=0.06$.}
		\label{fig:delayp5}
	\end{figure}
	
	%This is illustrated in \cite{Nguyen2015a}, which rigorously proves that the $L_1$ architecture has a time-delay margin bounded away from zero. 
	
	When the input delay is large enough, the tracking performance of the unmodified ${\cal L}_1$ control system is degraded. For instance, with delay $\tau=0.07$, the system output deviates from the desired trajectory and exhibits high-frequency oscillations as seen in Figure~\ref{fig:delayp52}. Further increases in input delay eventually result in unbounded system output. 
	
	%This phenomenon can be explained by the delay-dependent stability condition in \eqref{8dm}. Since the left-hand side of \eqref{8dm} is below the right-hand side, when the input delay $\tau$ is small, the condition is satisfied, and the control system is stable. However, when the delay is increased, $f(\tau,0)\theta_b$ also increases. At some point, for example when $\tau=0.052$, the value of $f(0.052,0)\theta_b$ is higher than 1. This violates the stability condition in \eqref{8dm}. As a result, the system output blows up as observed in Figure~\ref{fig:delayp52}.  
	\begin{figure}[h]
		\centering
		\centering
		\includegraphics[width=.9\textwidth]{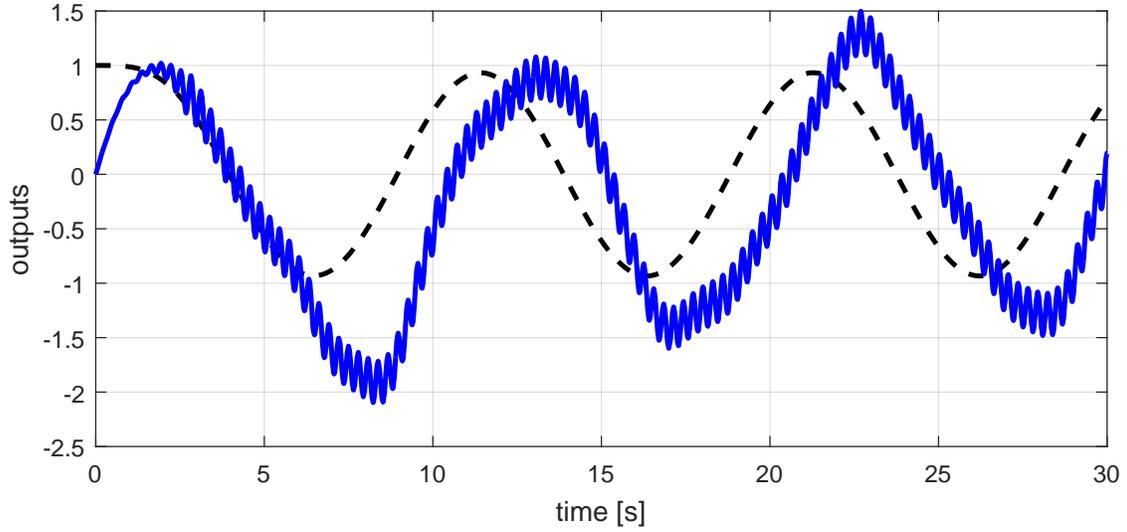}
		\caption{Response with input delay $\tau=0.07$: high frequency oscillations with large tracking errors.}
		\label{fig:delayp52}
	\end{figure}
	
	We next illustrate the efficacy of the delay compensation modification represented by $\hat\tau$ in \eqref{8sp}. The input delay $\tau$ is kept at $0.07$, while $\hat\tau$ is set to $0.02$. The resultant output in Figure~\ref{fig:wtauhat} shows that the system recovers desired performance with small tracking error in the presence of the compensation delay $\hat\tau$. In fact, the control system remains stable for values of $\tau$ as high as $0.43$ provided that $\hat\tau$ is tuned to a value near $0.43$.
	%This is about nine times improvement in the ability to tolerate input delay of the modified controller as compared to the unmodified controller.
	\begin{figure}[!h]
		\centering
		\includegraphics[width=.9\textwidth]{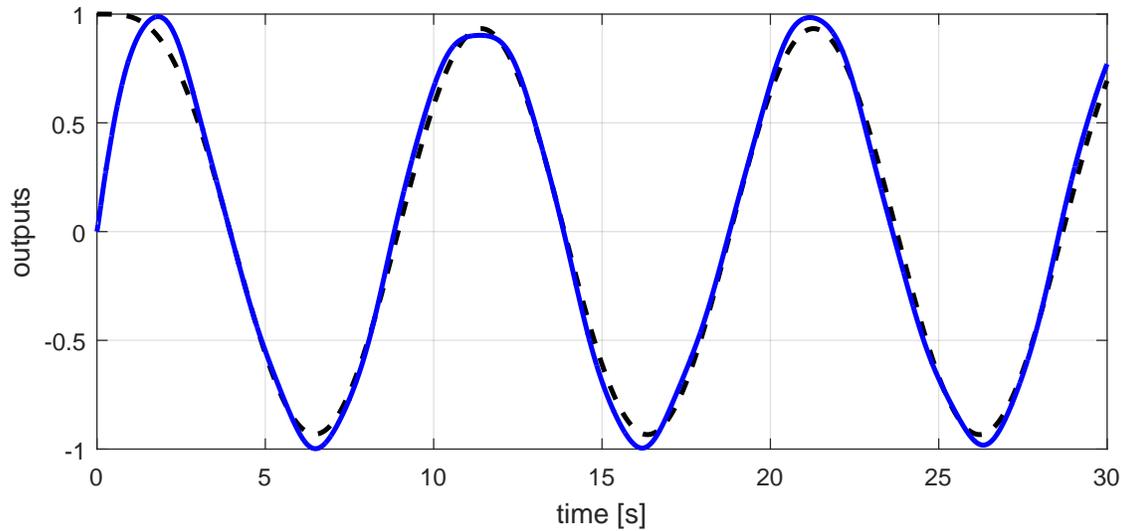}
		\caption{Response with input delay $\tau=0.07$ and $\hat\tau=0.02$: desired tracking is recovered}
		\label{fig:wtauhat}
	\end{figure}
	\subsection{Stability chart via a continuation approach}
	In this section, we discuss a method for numerically analyzing the sufficient stability condition in \eqref{8L1cond}. This method will result in a stability chart based on techniques of parameter continuation. The result will also demonstrate the effect of the compensation delay as well as the dependence of the delay compensation performance on the low-pass filter bandwidth.

	It is practically impossible to determine the explicit form of the impulse response for the transfer function in the definition of $f(\tau,\hat\tau)$ in \eqref{8dm} due to the delay terms $\mbox{e}^{-\tau s}$ and $\mbox{e}^{-\hat\tau s}$, which result in infinite number of poles of the transfer function. To overcome this challenge, we employ a $(5,5)$ Pad\'{e} approximant to arrive at the following approximation
	\begin{align}
		\label{padedelay}
		e^{-\tau s}\approx\frac{\sum\limits_{i=0}^{5} (-1)^ic_i\tau^is^i}{\sum\limits_{j=0}^{5} c_j\tau^js^j},
	\end{align}
	where
	\begin{align}
		\label{padecoef}
		c_i=\frac{(10-i)!5!}{(10)!i!(5-i)!}.
	\end{align}
	
	The convergence and the error bounds of Pad\'{e} approximants are discussed in \cite{Lam1990}. A Pad\'{e} approximant allows for the expansion of the delay terms as rational transfer functions. This enables the estimation of the norm in the definition of $f(\tau,\hat{\tau})$ in \eqref{8fd}, given values of $\tau$ and $\hat{\tau}$, via the algorithm proposed in \cite{Neil1995} with a tolerant of $10^{-5}$. The control parameters remain unchanged from the previous simulations. While increasing $\tau$ from zero, for each value of the input delay $\tau$, we vary $\hat{\tau}$ from $\tau$ until the stability condition \eqref{8dm} is violated.

    We compute the stability chart for all $\tau$ and $\hat\tau$ that satisfy the stability condition in \eqref{8L1cond} for the example system of interest in this section. This is performed using the continuation toolbox called COCO \cite{Dankowicz2013}. The continuation tolerant is set to $10^{-5}$. For a given value of the filter bandwidth $k$, we compute the implicitly defined solution manifold of the equation
    \begin{align}
    \label{zerofunc}
    f(\tau,\hat{\tau})-0.5=0
    \end{align}
    
    Figure~\ref{fig:k25} shows the result for a bandwidth $k=25$. The solid line represents the pairs $(\tau,\hat{\tau})$ values that satisfy \eqref{zerofunc}. The use of the continuation techniques allows for the automatic computation of the complicated folds on the curve, which may be highly challenging for traditional root-finding methods, for example, the bisection method. On one side of the curve, the stability condition holds and the stability of the modified controller is guaranteed. In contrast, on the other side of the manifold, the condition is violated and stability is no longer guaranteed. The dash-dot line indicates the identity line. 
    
    The continuation result confirms the stability behavior suggested by the theoretical analysis in Section~\ref{sec:trans}. In particular, along the identity line, $\hat\tau=\tau$, there is a $\tau_s$ such that when $\tau<\tau_s$, the system stability is guaranteed. For the solution manifold in Figure~\ref{fig:k25} with $k=25$, we obtain $\tau_s=0.212$. In fact, the manifold suggests that stability is guaranteed for even higher values of $\tau$ off the identity line. Hence, the largest possible $\tau$ that guarantees stability if predicted by the theoretical analysis may be conservative.
    
    In addition, the numerical results also validate this paper's claim that for a given $\tau\leq\tau_s$, there are $\bar{\delta}$ and $\underline{\delta}$ such that the system stability is guaranteed if $\hat{\tau}\in [\tau+\bar{\delta}, \tau-\underline{\delta}]$. For example, in Figure~\ref{fig:k25} with $k=25$, at $\tau=0.15$, we have $\bar{\delta}=0.206$ and $\underline{\delta}=0.071$. In addition, when $\tau<0.042$, the value band $\bar{\delta}+\underline{\delta}$ for $\hat{\tau}$  is significantly larger as compared to that when $\tau\ge 0.042$. When $\tau$ is around $0.042$, the manifold exhibits a dramatic change and the value of $\bar{\delta}$ varies greatly. Another observation is that when $\tau<0.056$, the value of $\tau-\underline{\delta}$, represented by the brown solid line in Figure~\ref{fig:k25}, equals zero since $\hat{\tau}$ is non-negative. This brown solid line also indicates the lower bound for time-delay margin of the adaptive control system in the absence of the delay-compensation modification.
    \begin{figure*}[h!]
    	\centering
    	\includegraphics[width=1\textwidth]{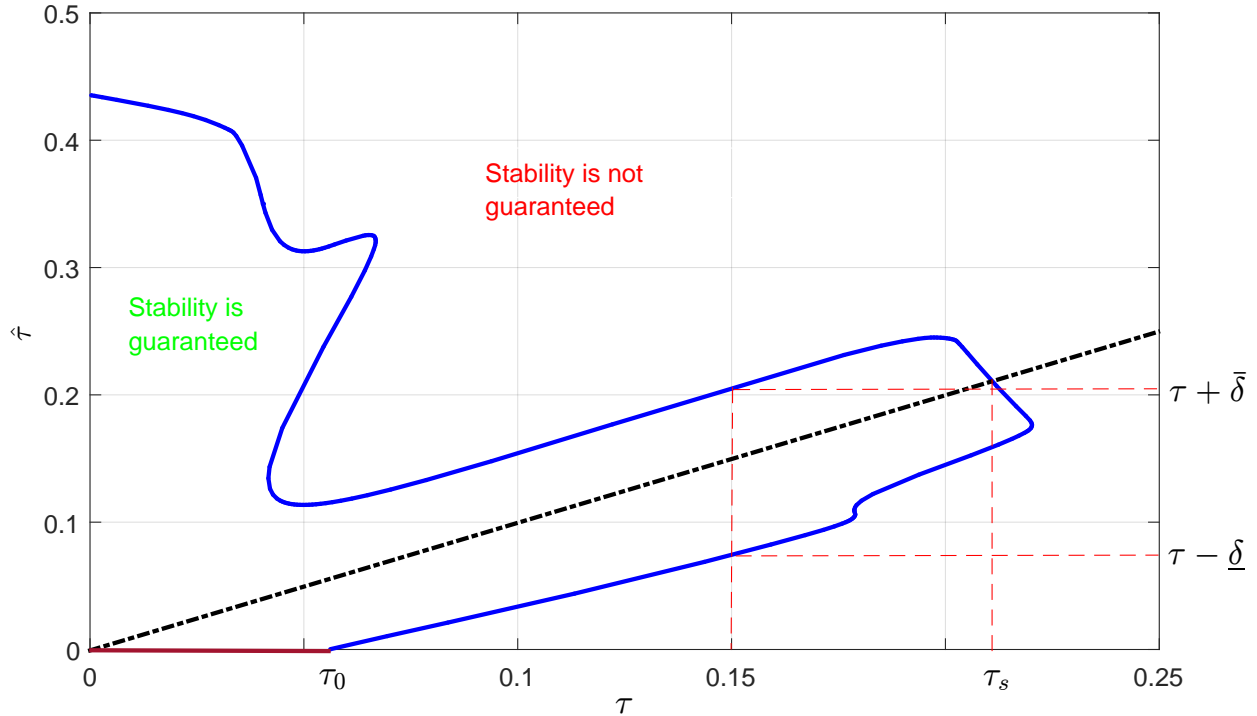}
    	\caption{Stability chart of the adaptive control system with $k=25$}
    	\label{fig:k25}
    \end{figure*}    
    
    To study the dependence of the stability chart on the filter bandwidth $k$, the same continuation process is performed for $k=50$ and $100$. The results, given by Figures~\ref{fig:k50} and \ref{fig:k100}, respectively, illustrate several points. First, in the presence of the delay compensation $\hat{\tau}$, the time-delay margin of the unmodified controller, indicated by the brown solid lines in Figures~\ref{fig:k25}, \ref{fig:k50}, and \ref{fig:k100}, is decreased as the bandwidth increases. This is consistent with the result in \cite{Nguyen2015tdm}. When the input delay is near zero, the maximum allowable $\hat{\tau}$ is large and increasing with increasing bandwidth $k$. This implies that over treatment when the delay is small is not a problem since stability is guaranteed even for a large deviation between the compensation delay and the actuator delay. 
    
    Furthermore, for input delay $\tau$ far away from zero, the value band of $\hat{\tau}$ that guarantees stability gets narrower for higher bandwidth. For example, when $\tau=0.15$, the allowable value band $\bar{\delta}+\underline{\delta}$ for $\hat{\tau}$ is $0.135$, $0.068$, and $0.053$ with $k=25$, $50$, and $100$, respectively. Another point to remark is that increasing filter bandwidth surprisingly improves the time-delay margin of the adaptive system along the identity line in the $(\tau,\hat{\tau})$ plane. Specifically, the time-delay margin along the identity line $\tau_s=0.212,~0.225$, and $0.233$ with $k=25$, $50$, and $100$, respectively. This result implies an interesting property of the modified controller. In the unmodified ${\cal L}_1$ control framework, increasing the filter bandwidth deteriorates system robustness (see \cite{Nguyen2015tdm} for the detailed analysis). In contrast, with the delay-compensation modification, when it is possible to set $\hat{\tau}=\tau$, increasing bandwidth indeed improves system robustness, indicated by the ability to tolerate input delay.
	
		\begin{figure*}%[b]
			\centering
			\includegraphics[width=1\textwidth]{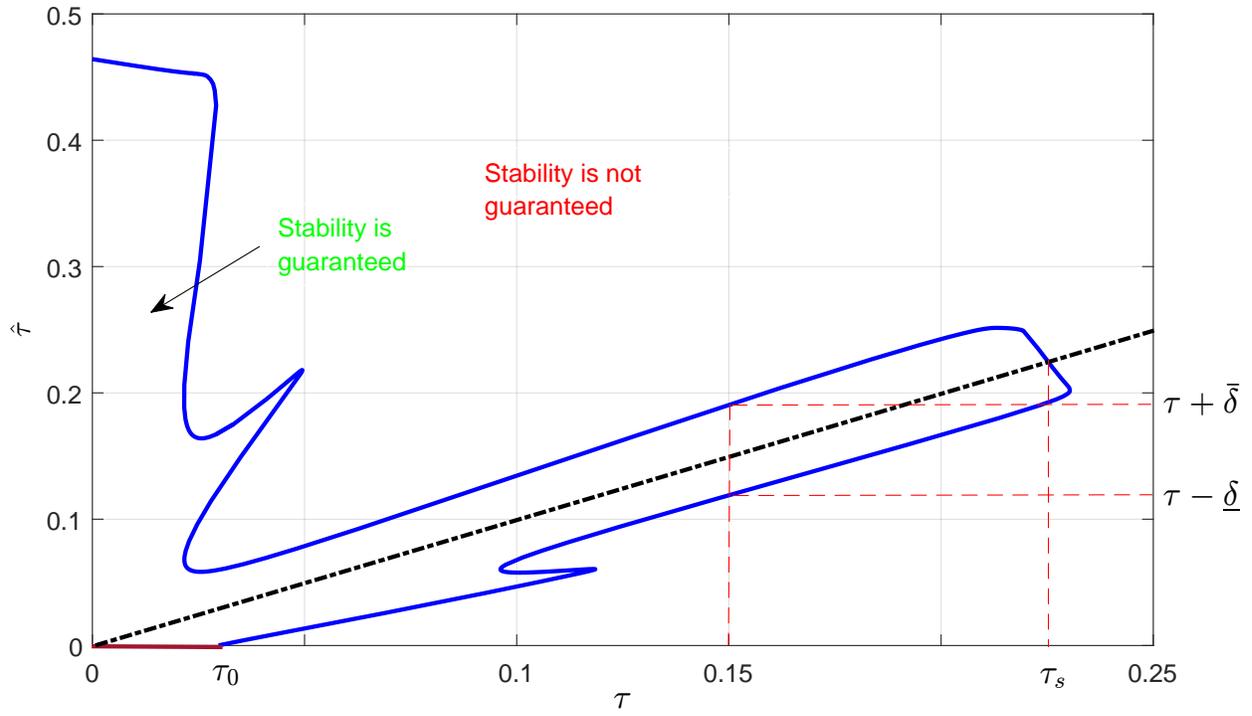}
			\caption{Stability chart of the adaptive control system with $k=50$}
			\label{fig:k50}
		\end{figure*}	
		\begin{figure*}%[b]
			\centering
			\includegraphics[width=1\textwidth]{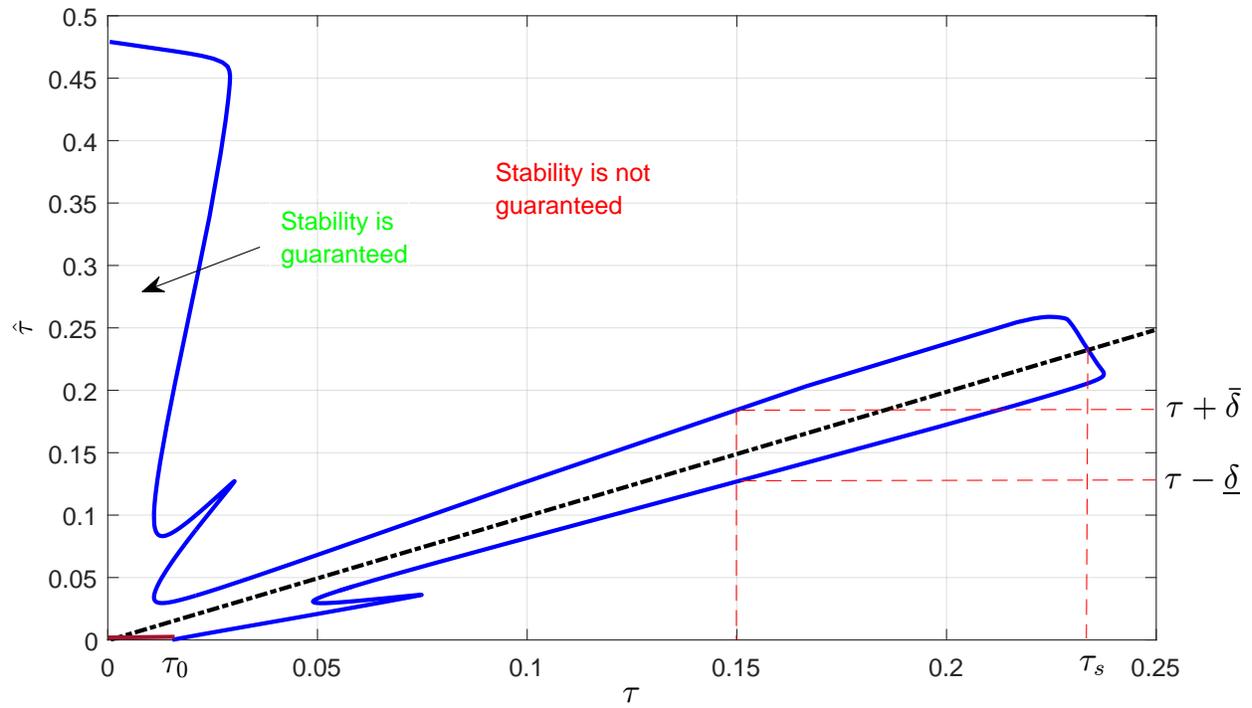}
			\caption{Stability chart of the adaptive control system with $k=100$}
			\label{fig:k100}
		\end{figure*}

	\section{Conclusion}
	\label{8conc}
	We have suggested and analyzed a modification to a controller with fast adaptation to compensate for an actuator delay in the plant. The idea is simple: we inject a delay in the control input of the predictor so that it emulates the structure of the actual plant equation. We first prove that a nonadaptive reference system is bounded-input-bounded-output stable provided that a delay-dependent stability condition satisfied. In that case, when the adaptive gain is chosen sufficiently large, the adaptive system output is shown to follow closely that of the nonadaptive reference system. The stability condition may be satisfied for a range of values of the compensation delay about the input delay, for sufficiently large filter bandwidth and sufficiently small values of the input delay. The set of values of the delays that satisfy the stability condition may be estimated numerically using Pad\'{e} approximants.

	%The bounded-input-bounded-output stability of the reference system and the boundedness of the adaptive system's output are proved rigorously. Furthermore, the existence of a stability range for the input delay and a stability range for the delay compensation around it is shown analytically. 
	The efficacy of the delay-compensation term has not been rigorously proved though it is demonstrated by the illustrative example and numerical analysis. The method of proof does not produce an explicit estimate for the upper bound on the range of input delay $\tau$ for which stable behavior may be achieved. In addition, a closed-form expression for the allowable range of $\hat{\tau}$ around $\tau$ for the system stability has not been formulated. Furthermore, the predictable dependence of the system's time-delay margin on $\hat{\tau}$ or $\hat{\tau}-\tau$ has not been completely investigated. The main challenge is how to obtain an analyzable form of the ${\cal L}_1$ norm in terms of the delays in the stability condition \eqref{8dm}. Finding a way to overcome this challenge will be the focus of future work.
	
\section*{Acknowledgement}
This work is supported by Agriculture and Food Research Initiative Competitive Grant no. 2014-67021-22109 from the USDA National Institute of Food and Agriculture.

\end{document}